\documentclass[12pt]{article}

\usepackage[margin=1in]{geometry} 
\usepackage{amsmath,amsthm,amssymb}

\newcommand{\Z}{\mathbb{Z}}

\newcommand{\F}{\mathbb{F}}

\newcommand{\maxsubgr}[1][n]{\ensuremath{m_{#1}}}
\newcommand{\subgr}[1][n]{\ensuremath{a_{#1}}}
\newcommand{\maxsubmod}[1][n]{\ensuremath{\tilde{m}_{#1}}}

\newcommand{\divides}{\mid}
\newcommand{\submodule}{\leq}

\DeclareMathOperator{\Der}{Der}
\DeclareMathOperator{\Hom}{Hom}
\DeclareMathOperator{\GL}{GL}
\DeclareMathOperator{\mdeg}{mdeg}
\DeclareMathOperator{\BS}{BS}

\newtheorem{theorem}{Theorem}[section]
\newtheorem{corollary}[theorem]{Corollary}
\newtheorem{lemma}[theorem]{Lemma}

\title{Maximal subgroup growth of a few polycyclic groups}
\author{Andrew James Kelley and Elizabeth Ciorsdan Dwyer Wolfe\thanks{This paper was done with the support of the Student Collaborative Research grant at Colorado College.}}

\begin{document}

\maketitle
 
 
 \begin{abstract}
 	We give here the exact maximal subgroup growth of two classes of polycyclic groups. Let 
 	$G_k = \langle x_1, x_2, ..., x_k \mid x_ix_jx_i^{-1}x_j \text{ for all }   i  <  j \rangle$. So 
 	$G_k = \mathbb{Z} \rtimes (\mathbb{Z} \rtimes (\mathbb{Z} \rtimes ... \rtimes \mathbb{Z})$.
 	Then for all $k \geq 2$, we calculate $m_n(G_k)$, the number of maximal subgroups of $G_k$ of index $n$, exactly. 
 	Also, for infinitely many groups $H_k$ of the form
 	$\mathbb{Z}^2 \rtimes G_2$, we calculate $m_n(H_k)$ exactly.
 \end{abstract}

\section{Introduction}
Let $G$ be a finitely generated (f.g.) group. We denote by $a_n(G)$ the number of subgroups of $G$ of index $n$ (which is necessarily finite), and we
denote by $\maxsubgr(G)$ the number of maximal subgroups of $G$ of index $n$. The subgroup growth of $G$ deals with the growth rate
of the function $a_n(G)$ and related functions, such as $\maxsubgr(G)$ or $s_n(G) := \sum_{k = 1}^n a_k(G)$ or of counting 
only the normal subgroups of $G$ of index $n$.

The area of subgroup growth has had some great success. One highlight is the classification of all f.g.\ groups for which
$a_n(G)$ is bounded above by a polynomial in $n$ (see chapter 5 in \cite{Lubotzky-and-Segal}). Also, Jaikin-Zapirain and Pyber
made a great advance in \cite{Jaikin-Zapirain2011},
 where they give a ``semi-structural characterization'' of groups $G$ for which $\maxsubgr(G)$ is bounded above by a polynomial in $n$.

For calculating the \emph{word} growth in a group with polynomial (word) growth, this degree of polynomial growth is given
by nice, simple formula. However, for subgroup growth, it is often very challenging, given
a group $G$ of polynomial subgroup growth, to calculate $\deg(G)$, its degree of polynomial growth:
\[
\deg(G) = \inf \{ \alpha \mid \subgr(G) \leq n^\alpha \text{ for all large $n$} \} = \limsup \frac{\log \subgr(G)}{\log n}.
\]
Similarly for groups $G$ with polynomial maximal subgroup growth, it is often difficult to determine $\mdeg(G)$, where
\[
\mdeg(G) = \inf \{ \alpha \mid \maxsubgr(G) \leq n^\alpha \text{ for all large $n$} \} = \limsup \frac{\log \maxsubgr(G)}{\log n}.
\]
But progress in both areas have been made. In \cite{Shalev_On_the_degree}, Shalev calculated $\deg(G)$ exactly for certain
metabelian groups and for all virtually abelian groups. In \cite{Kelley-some metabelian groups}, the first author 
calculated $\mdeg(G)$ for some metabelian groups, and in \cite{Kelley-dissertation} he does so for all virtually abelian groups.

What is even rarer than calculating $\mdeg(G)$ is to give an exact formula for $\maxsubgr(G)$ (or of $\subgr(G)$).
In \cite{Gelman}, Gelman gives a beautiful, exact formula for $\subgr(\BS(a,b))$, assuming $\gcd(a,b) = 1$, 
where $\BS(a, b)$ is the Baumslag-Solitar group having presentation $\langle x, y \mid y^{-1}x^ay = x^b \rangle$.
Gelman's argument can be easily modified to give an exact formula for $\maxsubgr(BS(a,b))$, where again $\gcd(a,b) = 1$. 
(Alternatively, a different argument, that
explains why $\gcd(a,b) = 1$ is such a nice assumption, is given by the first author in \cite{Kelley-Baumslag-Solitar groups}.)

Since there are so few groups $G$ for which $\maxsubgr(G)$ is known exactly, this paper does so for two infinite classes of
polycyclic groups.

For $k \geq 2$, consider the group $G_k$ with presentation  $\langle x_1, x_2, ..., x_k | x_ix_jx_i^{-1}x_j \text{ for all }   i  <  j \rangle$. Then $G_k$ has the form 
$\Z \rtimes (\Z \rtimes (\Z \rtimes ... \rtimes \Z)$, where the $i$th $\Z$, reading from right to left, is generated by $x_i$. 
Note that the Hirsch length of $G_k$ is $k$, and so if $i \neq j$, then $G_i \not\cong G_j$. In Theorem~\ref{thm:m_n(G_k)},
we calculate $\maxsubgr(G_k)$ exactly for $k \geq 2$.

Let $G_2$ be as above, but write $G_2=\Z \rtimes \Z$ as $\langle b \rangle \rtimes \langle a \rangle$ instead of 
$\langle x_2 \rangle \rtimes \langle x_1 \rangle$. 
For $k \in \Z$, we will define the group $H_k$, which is of the form $\Z^2 \rtimes G_2$. 
The generator $a$ acts (by conjugation) on $\Z^2$ by multiplication by the matrix $A = \left(\begin{smallmatrix} 0 & 1 \\ 1 & 0 \end{smallmatrix}\right)$, 
and the generator $b$ acts (by conjugation) on $\Z^2$ by multiplication by the matrix 
$B_k =  \left(\begin{smallmatrix} 0 & 1 \\ -1 & k \end{smallmatrix}\right)$. Then in Theorem~\ref{thm:exact maximal subgroup growth of H_k},
we calculate $\maxsubgr(H_k)$ exactly for all $k \in \Z$. A consequence of this theorem is that among the groups $H_k$, there
are infinitely many that are pairwise non-isomorphic. Also, it is interesting that $\mdeg(H_2) = 2$, but $\mdeg(H_k) = 1$ for all $k \neq 2$.

For a module $N$, we let $\maxsubmod(N)$ denote the number of maximal submodules of $N$ of index $n$. Also, $M \leq N$ denotes that
$M$ is a submodule of $N$. And $\langle n_1, n_2, ...,n_k \rangle$, where $n_i \in N$ for all $i$, 
denotes the submodule they generate. Recall that when $N$ is a $G$-module, a function $\delta : G \to N$ is called a derivation 
(or a 1-cocycle)
if $\delta(gh) = \delta(g) + g\cdot \delta(h)$ for all $g, h \in G$. The set of derivations from $G$ to $N$ is denoted
$\Der(G, N)$.


\section{Groups of the form  $\Z \rtimes (\Z \rtimes (\Z \rtimes ... \rtimes \Z)$}
Let $G_k$ be as in the introduction. (And let $G_1 = \Z$.) In the following lemma, we will use the fact that if $\delta \in \Der(G, N)$, then
for $g \in G$, we have $\delta(g^{-1}) = -g^{-1}\delta(g)$ which follows from the fact that $\delta(g^{-1}g) = \delta(1) = 0$.


\begin{lemma} \label{lem:General case for generators x_i lemma}
Let $S$ be a simple $G_k$-module. There is a one-to-one correspondence between the set $\Der(G_k, S)$ and the set $\Delta$ of all functions 
	$\delta: \{x_1, x_2,...x_k\} \to S$ satisfying
\begin{equation}
(1-x_j^{-1})\delta(x_i) = (-x_i-x_j^{-1})\delta(x_j) \text{\quad for all $i$, $j$ with $i<j$.} \tag{$*$}
\end{equation}
 
\end{lemma}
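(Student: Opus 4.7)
\bigskip

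\noindent\textbf{Proof proposal.} The plan is to invoke the standard fact that a derivation from a group $G$ with presentation $\langle X \mid R \rangle$ into a $G$-module $S$ is determined by, and exactly corresponds to, an assignment $X \to S$ whose induced ``derivation'' on the free group $F(X)$ vanishes on every relator in $R$. Concretely, any function $\delta : \{x_1,\dots,x_k\} \to S$ extends uniquely to a derivation $F(x_1,\dots,x_k) \to S$ (viewing $S$ as an $F$-module via $F \twoheadrightarrow G_k$) by repeated use of the cocycle identity $\delta(gh) = \delta(g) + g\cdot\delta(h)$; that extension descends to a derivation on $G_k$ if and only if it sends each of the defining relators $r_{ij} = x_i x_j x_i^{-1} x_j$ (for $i<j$) to $0$. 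So the task reduces to showing that $\delta(r_{ij}) = 0$ is equivalent to $(*)$.

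First I would compute $\delta(r_{ij})$ by repeatedly applying the cocycle identity:
\begin{align*}
\delta(x_i x_j x_i^{-1} x_j) &= \delta(x_i) + x_i\,\delta(x_j) + x_ix_j\,\delta(x_i^{-1}) + x_ix_jx_i^{-1}\,\delta(x_j).
\end{align*}
Now I would substitute the two simplifications indicated in the paragraph before the lemma: $\delta(x_i^{-1}) = -x_i^{-1}\delta(x_i)$, which gives $x_ix_j\,\delta(x_i^{-1}) = -x_ix_jx_i^{-1}\,\delta(x_i)$; and the defining relation $x_ix_jx_i^{-1} = x_j^{-1}$ in $G_k$ (obtained from $r_{ij}=1$), which lets me replace $x_ix_jx_i^{-1}$ by $x_j^{-1}$ as an operator on $S$. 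After these substitutions,
\[
\delta(r_{ij}) = \delta(x_i) + x_i\,\delta(x_j) - x_j^{-1}\,\delta(x_i) + x_j^{-1}\,\delta(x_j) = (1-x_j^{-1})\delta(x_i) + (x_i+x_j^{-1})\delta(x_j),
\]
and setting this to $0$ yields $(1-x_j^{-1})\delta(x_i) = (-x_i - x_j^{-1})\delta(x_j)$, which is exactly $(*)$.

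Combining these two directions gives the bijection: restriction $\delta \mapsto \delta|_{\{x_1,\dots,x_k\}}$ lands in $\Delta$, and every element of $\Delta$ extends to a well-defined derivation. I do not expect any real obstacle here; the only subtlety is the careful bookkeeping in the chain of applications of the cocycle identity (particularly for the $x_i^{-1}$ term) and the identification of $x_ix_jx_i^{-1}$ with $x_j^{-1}$ as elements acting on the module $S$. The simplicity of $S$ plays no role in the lemma itself and is presumably imposed because it will be needed in subsequent uses of this description of $\Der(G_k, S)$.
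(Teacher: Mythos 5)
Your proposal is correct and follows essentially the same route as the paper: both directions are handled by the standard extension/descent facts for derivations on a presented group (the paper cites Brown's exercises 3(a) and 4(a), i.e.\ Lemmas 2.19--2.20 of the first author's dissertation), and the computation of $\delta(x_ix_jx_i^{-1}x_j)$ with the substitutions $\delta(x_i^{-1}) = -x_i^{-1}\delta(x_i)$ and $x_ix_jx_i^{-1} = x_j^{-1}$ acting on $S$ matches the paper's calculation. Your closing remark is also accurate: the simplicity of $S$ is not used in the proof and only matters in later applications.
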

 
\begin{proof}
First, let $\delta \in \Der(G_k, S)$. Fix $i$ and $j$ with $i < j$. By the relations of the presentation of $G_k$, we have that $x_ix_j = x_j^{-1}x_i$. Thus, $\delta(x_ix_j) = \delta(x_j^{-1}x_i)$. This gives us \\
\begin{align*}
    \delta(x_ix_j)&=\delta(x_j^{-1}x_i)\\
    \delta(x_i) + x_i\delta(x_j) &= -x_j^{-1}\delta(x_j) + x_j^{-1}\delta(x_i) \\ 
    \delta(x_i) - x_j^{-1}\delta(x_i) &= -x_i\delta(x_j) - x_j^{-1}\delta(x_j) \\ 
    (1-x_j^{-1})\delta(x_i) &= (-x_i - x_j^{-1})\delta(x_j)
\end{align*}
which is the equation in ($*$). 

Next, let $\delta \in \Delta$. By exercise 3(a) in \cite{Brown} (pg.\ 90) (or Lemma 2.20 from \cite{Kelley-dissertation}), 
there is a unique derivation $\delta: F_k \xrightarrow{} S$, where $F_k$ is the
free group on $k$ generators and
the action of $F_k$ on $S$ is the induced action. 

Fix $i$ and $j$ with $i < j$. Then $\delta(x_ix_jx_i^{-1}x_j) = 0$ because
\begin{align*}
    \delta(x_ix_jx_i^{-1}x_j) &= \delta(x_i) + x_i\delta(x_jx_i^{-1}x_j) \\ &= \delta(x_i) + x_i(\delta(x_j) + x_j\delta(x_i^{-1}x_j)) \\ &= \delta(x_i) + x_i(\delta(x_j) + x_j(\delta(x_i^{-1}) + x_i^{-1}\delta(x_j))) \\ &= \delta(x_i) + x_i\delta(x_j) + x_ix_j\delta(x_i^{-1})+x_ix_jx_i^{-1}\delta(x_j) \\ &= \delta(x_i) + x_i\delta(x_j) -x_ix_jx_i^{-1}\delta(x_i)+x_j^{-1}\delta(x_j) \\ &= \delta(x_i) + x_i\delta(x_j) -x_j^{-1}\delta(x_i) + x_j^{-1}\delta(x_j) \\ &= (\delta(x_i) - x_j^{-1}\delta(x_i))- (-x_i\delta(x_j) - x_j^{-1}\delta(x_j)) \\ &= 0.
\end{align*}
Here, the last equality holds because ($*$) holds. Thus, by Lemma 2.19 from \cite{Kelley-dissertation},
which is basically exercise 4(a) in \cite{Brown} (pg.\ 90), we have a derivation $\delta$ from $G_k$ to $S$. 

\end{proof}

\begin{lemma} 
	\label{lem:counting derivations form G_k}
Consider $\Z/p\Z$, a simple $G_k$-module, where each generator $x_i$ of $G_k$ acts on $\Z/p\Z$ by multiplication by $-1$. Then 

\[ \lvert\Der(G_k, \Z/p\Z)\rvert = \begin{cases}
2^k & \text{if } p=2\\
p & \text{if } p \neq 2.

\end{cases}\]
\end{lemma}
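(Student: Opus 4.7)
The plan is to apply Lemma \ref{lem:General case for generators x_i lemma} directly: the set $\Der(G_k, \Z/p\Z)$ is in bijection with the set $\Delta$ of functions $\delta: \{x_1,\dots,x_k\} \to \Z/p\Z$ satisfying $(*)$. So it suffices to count solutions of $(*)$ under the given action.

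First I would evaluate the coefficients in $(*)$ using the hypothesis that every $x_i$ acts by $-1$. Since multiplication by $-1$ is its own inverse, each $x_j^{-1}$ also acts as $-1$. Substituting into $(*)$, the left-hand side $(1 - x_j^{-1})\delta(x_i)$ collapses to $2\delta(x_i)$, and the right-hand side $(-x_i - x_j^{-1})\delta(x_j)$ collapses to $2\delta(x_j)$. Thus $(*)$ reduces to the system $2\delta(x_i) = 2\delta(x_j)$ for all $i<j$.

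Next, I would split into the two cases. If $p = 2$, then $2 \equiv 0$ in $\Z/p\Z$, so every equation in the system is automatically satisfied. Every function $\delta : \{x_1,\dots,x_k\} \to \Z/2\Z$ lies in $\Delta$, and there are $2^k$ such functions. If $p \neq 2$, then $2$ is a unit in $\Z/p\Z$, and the system forces $\delta(x_1) = \delta(x_2) = \cdots = \delta(x_k)$. The derivation is thus determined by the single common value in $\Z/p\Z$, giving exactly $p$ elements of $\Delta$.

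There is no real obstacle here: once the action is plugged in, the computation is essentially linear algebra over $\Z/p\Z$, with the only subtlety being that $2$ is a zero divisor exactly when $p = 2$. The value of the lemma lies in setting up the counting correctly via Lemma \ref{lem:General case for generators x_i lemma} so that one does not need to verify the cocycle condition on all of $G_k$ directly.
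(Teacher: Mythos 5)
Your proof is correct, and for $p \neq 2$ it is identical to the paper's: both reduce $(*)$ to $2\delta(x_i) = 2\delta(x_j)$ and invert $2$. The only divergence is the case $p = 2$. The paper does not use Lemma~\ref{lem:General case for generators x_i lemma} there at all; instead it observes that the action of $G_k$ on $\Z/2\Z$ is trivial, so $\Der(G_k, \Z/2\Z) = \Hom(G_k, \Z/2\Z)$, which has order $2^k$ (in effect reading this off from the abelianization of $G_k$). You stay inside Lemma~\ref{lem:General case for generators x_i lemma} and note that $(*)$ becomes the vacuous condition $0 = 0$ mod $2$, so all $2^k$ functions $\{x_1,\dots,x_k\} \to \Z/2\Z$ lie in $\Delta$. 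Both arguments are valid; yours has the small advantage of treating the two cases uniformly through a single counting lemma, while the paper's version makes transparent the structural reason the answer is $2^k$, namely that these derivations are exactly homomorphisms to $\Z/2\Z$.
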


\begin{proof} 
 If $p=2$, then the action of $G_k$ on $\Z/2\Z$ is trivial, and so 
$\lvert\Der(G_k, \Z/2\Z)\rvert=\lvert\Hom(G_k,\Z/2\Z)\rvert$, which is $2^k$. 

 Next, suppose $p\neq2$. The action of $(1-x_i^{-1})$ and $(-x_i-x_j^{-1})$ on $\Z/p\Z$ is multiplication by 2, which is invertible since
 $p \neq 2$. So  ($*$) from Lemma~\ref{lem:General case for generators x_i lemma} becomes
 $2\delta(x_i) = 2\delta(x_j)$ for all $i < j$, which simplifies to $\delta(x_i) = \delta(x_j)$ for all $i < j$.
 
So by Lemma \ref{lem:General case for generators x_i lemma}, we may choose a derivation by picking $\delta(x_k)$ to be any element of $\Z/p\Z$, 
and then letting $\delta(x_i) = \delta(x_k)$ for all $i < k$. Thus $\lvert\Der(G_k, \Z/p\Z)\rvert = |\Z/p\Z|=p$.
\end{proof}


\begin{theorem} 
	\label{thm:m_n(G_k)} 
	Let $G_k$ be as above. Then 
\[ \tag{$*$}
\maxsubgr(G_{k}) =  \begin{cases} 1 + (k-1)n & \text{if $n$ is a prime with $n>2$} \\ 
    \sum\limits_{j=0}^{k-1} 2^j & \text{if $n=2$}\\
    0 & \text{if $n$ is not prime}. \end{cases}
\]
\end{theorem}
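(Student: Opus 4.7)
The plan is to induct on $k$, using that $\langle x_k \rangle$ is a normal subgroup of $G_k$ isomorphic to $\Z$, with quotient $G_k/\langle x_k \rangle \cong G_{k-1}$. The base case $k = 1$ (taking $G_1 = \Z$) is immediate: $\maxsubgr(\Z) = 1$ for $n$ prime and $0$ otherwise, which matches the formula.

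For the inductive step, let $M \leq G_k$ be a maximal subgroup of finite index $n$, and split on whether $\langle x_k \rangle \subseteq M$. If yes, the correspondence theorem gives a bijection between such $M$ and maximal subgroups of $G_{k-1}$ of index $n$, contributing $\maxsubgr(G_{k-1})$ to the count. If not, then $M\langle x_k \rangle = G_k$ by maximality, so $n = [\langle x_k \rangle : M \cap \langle x_k \rangle]$ and hence $M \cap \langle x_k \rangle = \langle x_k^n \rangle$. If $n = ab$ with $1 < a < n$, then $\langle M, x_k^a \rangle$ lies strictly between $M$ and $G_k$, contradicting maximality; so $n$ is forced to be prime. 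This is what produces the ``$\maxsubgr(G_k) = 0$ if $n$ is not prime'' clause.

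When $n = p$ is prime, $M \supseteq \langle x_k^p \rangle$, so $M/\langle x_k^p \rangle$ is a complement of $\Z/p\Z$ in $G_k/\langle x_k^p \rangle \cong \Z/p\Z \rtimes G_{k-1}$, where $\Z/p\Z$ carries the $G_{k-1}$-module structure in which each $x_i$ (for $i < k$) acts by $-1$. Under the standard bijection $\delta \leftrightarrow \{(\delta(h), h) : h \in G_{k-1}\}$ between $\Der(G_{k-1}, \Z/p\Z)$ and the complements of $\Z/p\Z$ in $\Z/p\Z \rtimes G_{k-1}$, these subgroups are counted by $|\Der(G_{k-1}, \Z/p\Z)|$. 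Lemma~\ref{lem:counting derivations form G_k}, applied with $k$ replaced by $k-1$ (its statement and argument extend naturally to $G_1 = \Z$, where the constraint in $(*)$ is vacuous), evaluates this to $p$ for odd $p$ and $2^{k-1}$ for $p = 2$.

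Combining the two cases yields the recursions $m_p(G_k) = m_p(G_{k-1}) + p$ for odd primes $p$ and $m_2(G_k) = m_2(G_{k-1}) + 2^{k-1}$; solving with base value $1$ at $k = 1$ gives the closed forms $1 + (k-1)p$ and $\sum_{j=0}^{k-1} 2^j$ claimed in the theorem. I expect the step most deserving of care is the Case B prime-index argument: this both forces the composite-$n$ count to vanish and justifies the reduction to $G_k/\langle x_k^p \rangle$ that lets Lemma~\ref{lem:counting derivations form G_k} be applied. The complement-to-derivation bijection is standard but worth stating explicitly to keep the bookkeeping clean.
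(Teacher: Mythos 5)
Your proof is correct and follows essentially the same route as the paper: induct on $k$ using $N=\langle x_k\rangle$ with quotient $G_{k-1}$, split the maximal subgroups according to whether they contain $N$, count those that do not via derivations, and evaluate with Lemma~\ref{lem:counting derivations form G_k}. The only real difference is that where the paper cites Lemma 5 of \cite{Kelley-some metabelian groups} for the identity $\maxsubgr(G_k)=\maxsubgr(G_{k-1})+\sum_{N_0}\lvert\Der(G_{k-1},N/N_0)\rvert$, you re-derive that decomposition inline through the complement--derivation bijection together with a direct intermediate-subgroup argument forcing the index to be prime (only the properness of $\langle M, x_k^a\rangle$ in $G_k$ is left implicit, and it is routine).
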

\begin{proof}
Consider $N$, the subgroup of $G_k$ generated by $x_k$. Then $N \cong \Z$, and 
$N \trianglelefteq G_k$ and $G_k/ N \cong G_{k-1}$. So $N$ is a $G_{k-1}$-module.  So since $G_k \cong N \rtimes G_{k-1}$,
Lemma 5 from \cite{Kelley-some metabelian groups} gives us 
\begin{equation*}
    \maxsubgr(G_k) = \maxsubgr(G_{k-1}) + \sum\limits_{N_0}\lvert \Der(G_{k-1}, N/N_0)\rvert
\end{equation*}
where the sum is over all maximal submodules $N_0$ of $N$ of index $n$. Of course, the maximal submodules of $N$ are precisely the 
subgroups of prime index.
 Thus if $n$ is not prime, then $\maxsubgr(G_k)=0$; this follows by induction on $k$.

Fix a prime $p$. For both cases $p>2$ and $p = 2$, we proceed by induction on $k$.

First, let $p > 2$, and let $k=1$. Then $m_p(G_1) = 1 = 1+(k-1)p$.
Assume ($*$) holds for $k=a$. Then $m_p(G_a)=1+(a-1)p$. Consider $k=a+1$. We have 
\begin{equation*}
   m_p(G_{a+1})= m_p(G_a) + \sum\limits_{N_0} \lvert \Der(G_{a}, N/N_0)\rvert. 
\end{equation*}

By Lemma \ref{lem:counting derivations form G_k}, $\sum\limits_{N_0} \lvert \Der(G_{a}, N/N_0) \rvert = p$. So 
$m_p(G_{a+1}) = 1+(a-1)p + p = 1+ (a+1-1)p$, the desired result.

Finally, let $p=2$, and let $k=1$. Then $m_2(G_1) = 1=2^0$. Assume ($*$) holds for $k=a$. Then $m_2(G_a)= 2^0+2^1+ \cdots + 2^{a-1}$. 
Consider $k=a+1$. Then $\maxsubgr[2](G_{a+1})=\maxsubgr[2](G_a) + \lvert \Der(G_{a}, \Z/2\Z) \rvert$. By 
Lemma \ref{lem:counting derivations form G_k}, $\lvert\Der(G_{a}, \Z/2\Z)\rvert=$ $2^{a}$. Thus $m_{2}(G_{a+1})=2^0+2^1+ \cdots +2^{a-1} + 2^a$, the desired result. 
\end{proof}

 \section{Some groups of the form $\Z^2 \rtimes (\Z \rtimes \Z)$} 
 \label{sec:the groups H_n}

Next, we will define the groups $H_k$, which are of the form $\Z^2 \rtimes (\Z \rtimes \Z)$. We will write $G_2=\Z \rtimes \Z$ as $\langle b \rangle \rtimes \langle a \rangle$ instead of $\langle x_2 \rangle \rtimes \langle x_1 \rangle$. Recall that $G_2=\langle a, b | aba^{-1}b\rangle$.
To form a group of the form $\Z^2 \rtimes (\Z \rtimes \Z)$, what we need is an action
of $G_2$ on $\Z^2$, and so we just need to find matrices $A, B \in \GL_2(\Z)$ such that
$ABA^{-1}B = I_2$, and then we can say that the action (by conjugation) of the generator $a$ on $\Z^2$ is multiplication by the matrix $A$,
and the generator $b$ acts (by conjugation) on $\Z^2$ by multiplication by the matrix $B$.

We will take $A = \begin{pmatrix} 0 & 1 \\ 1 & 0 \end{pmatrix}$. Let $B = \begin{pmatrix} w & x \\ y & z \end{pmatrix}$. Then 
$ABA^{-1}B  = \begin{pmatrix} y^2 + wz & yz + xz \\ wy + xw & wz + x^2 \end{pmatrix}$, and we need this to equal $I_2$.
And so we have $y^2 + wz = 1$, $wz + x^2 = 1$, $wy + xw = 0$ (equivalently, $w = 0$ or $x + y = 0$), and 
$yz + xz = 0$ (equivalently, $z = 0$ or $x + y = 0$). Also, since we need $\det(B) = \pm 1$, we must have $wz - xy = \pm 1$.
One way to solve these equations is to let $w = 0$. Then $x, y = \pm 1$. Take $x = 1$. If we take $y = -1$, then $z$
can be any integer.

Let the group $H_k$ be the group formed when we take $B$ to be $B_k = \begin{pmatrix} 0 & 1 \\ -1 & k \end{pmatrix}$.

\begin{lemma}
	\label{lem:maximal submodules contain pZ^2}
	Let $M$ be a maximal submodule of $\Z^2$. Then $p\Z^2 \submodule M$.
\end{lemma}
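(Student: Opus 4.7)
The plan is to analyze the quotient $V := \Z^2/M$, which by maximality of $M$ is a simple $G_2$-module, and to show that $pV = 0$ for some prime $p$, from which $p\Z^2 \submodule M$ follows immediately. The key structural fact driving the argument is that the $G_2$-action on $\Z^2$ is by $\Z$-linear automorphisms (given by the integer matrices $A$ and $B_k$), so any subgroup of $\Z^2$ or of $V$ that is defined purely in terms of the underlying abelian group structure (such as $p\Z^2$, $pV$, or $V[p]$) is automatically a $G_2$-submodule.

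First I would rule out that $V$ is torsion-free as an abelian group. Since $V$ is a quotient of a finitely generated abelian group, it is itself finitely generated; if it were torsion-free, it would be isomorphic to $\Z^r$ for some $r \ge 1$. But then, for any prime $p$, the subgroup $pV$ would be a proper nonzero $G_2$-submodule of $V$, contradicting the simplicity of $V$.

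Hence $V$ has a nonzero torsion element. Picking a prime $p$ dividing the order of such an element, the $p$-torsion subgroup $V[p]$ is a nonzero $G_2$-submodule, and so by simplicity $V[p] = V$. This means $pV = 0$, which is equivalent to $p\Z^2 \submodule M$, as desired.

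I do not expect any real obstacle: the whole argument is a short structural fact about simple $\Z[G_2]$-modules arising as quotients of $\Z^2$, relying only on the classification of finitely generated abelian groups plus the observation that the action is $\Z$-linear. The only point to verify carefully is that $pV$ and $V[p]$ really are $G_2$-submodules, but both are immediate from the fact that $G_2$ acts by abelian group automorphisms.
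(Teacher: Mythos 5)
Your argument is correct, but it is a genuinely different route from the paper's. You work entirely inside the module: setting $V = \Z^2/M$, you note $V$ is a finitely generated abelian group, rule out the torsion-free case because $pV$ would then be a nonzero proper submodule of the simple module $V$, and then use the $p$-torsion subgroup $V[p]$ to conclude $pV = 0$, i.e.\ $p\Z^2 \submodule M$. (In fact you need even less than the observation that $G_2$ acts by automorphisms: $pV$ and $V[p]$ are submodules of any $\Z[G]$-module simply because the action is $\Z$-linear.) The paper instead goes through the ambient group $H_k$: it invokes the fact that maximal subgroups of polycyclic groups have prime-power index (Robinson), uses the correspondence between maximal submodules of $\Z^2$ and maximal subgroups of $H_k$ to get $[\Z^2 : M] = p^j$, and then runs a Frattini-subgroup argument in the finite group $\Z^2/p^j\Z^2$ to drop from $p^j$ to $p$. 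Your proof is more elementary and more general --- it shows that any simple quotient of a $G$-module that is finitely generated as an abelian group is an $\F_p[G]$-module for some prime $p$, with no polycyclicity or Frattini theory needed --- while the paper's proof stays within the group-theoretic machinery (indices of maximal subgroups of $H_k$) that it uses elsewhere, at the cost of citing an external structural result.
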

\begin{proof}
	First, recall that every maximal subgroup of a polycyclic group has prime power index; this follows, for example, from the proof
	of Result 5.4.3 (iii) in \cite{Robinson}.
	
	Since $M$ yields a maximal subgroup of $H_k$ with index
	equal to $[\Z^2 : M]$, we must have $[\Z^2 : M] = p^j$ for some prime $p$. Therefore, $p^j \Z^2 \submodule M$. Consider the group
	$\Z^2 / p^j \Z^2$. Its Frattini subgroup is $p\Z^2/p^j\Z^2$, and therefore, $p\Z^2/p^j\Z^2 + M/p^j\Z^2 = M/p^j\Z^2$. And hence
	$p\Z^2 + M = M$, that is, that $p\Z^2 \submodule M$.
\end{proof}
For a prime $p$, inside the module $\Z^2$, consider the submodule
 $M_{p, \mathbf{w}} = \left\langle \left(\begin{smallmatrix} p \\ 0 \end{smallmatrix}\right), \left( \begin{smallmatrix} 0 \\ p \end{smallmatrix}\right), \mathbf{w}  \right\rangle$,
 where $\mathbf{w} \in \Z^2$. We will assume $\mathbf{w} \notin p\Z^2$.
 Then $M_{p, \mathbf{w}}$ is a proper (and hence maximal) submodule of $\Z^2$ if and only if $\mathbf{w}$ is an eigenvector
 of both matrices $A$ and $B_k$, considered as elements of $\GL_2(\F_p)$.
 
Let $\mathbf{v} = \left(\begin{smallmatrix} 1 \\ 1 \end{smallmatrix}\right)$ and 
$\mathbf{u} = \left( \begin{smallmatrix} 1 \\ -1 \end{smallmatrix}\right)$, and let
$M_p = M_{p, \mathbf{v}}$ and $M_{p,-1}  = M_{p, \mathbf{u}}$. Of course, $\mathbf{v}$ and $\mathbf{u}$ are eigenvectors of 
$A \in \GL_2(\F_p)$ for all $p$. (And $\mathbf{v} \equiv \mathbf{u}$ (mod 2). Hence, $M_2 = M_{2,-1}$.)

\begin{lemma}
	\label{lem:the maximal submodules of Z^2}
	With the above notation, we have that $M_p$ is a maximal submodule of $\Z^2$ iff $p \divides k - 2$. Also, 
	$M_{p, -1}$ is a maximal submodule of $\Z^2$ iff $p \divides k + 2$. Further, $M_{p, \mathbf{w}}$ is not a proper submodule of $\Z^2$
	unless $M_{p, \mathbf{w}} = M_p$ or $M_{p, -1}$. Thus, $p\Z^2$ is a maximal submodule of $\Z^2$ iff $p \nmid (k - 2)(k + 2)$.
	Finally there are no maximal submodules of $\Z^2$ besides (the appropriate) $M_p$, $M_{p, -1}$, and $p\Z^2$.
\end{lemma}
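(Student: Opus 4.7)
The plan is to invoke the criterion stated just before the lemma: $M_{p,\mathbf{w}}$ is a proper (and therefore maximal, since it has prime index) submodule of $\Z^2$ iff $\mathbf{w} \pmod p$ is a common eigenvector of $A$ and $B_k$, both viewed in $\GL_2(\F_p)$. Everything then reduces to linear algebra over $\F_p$, broken into cases by the residue of $k$ mod $p$ and by whether $p = 2$.

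First I will identify the eigenvectors of $A \pmod p$. The characteristic polynomial of $A$ is $\lambda^2 - 1$. For odd $p$ this factors as $(\lambda - 1)(\lambda + 1)$ with distinct roots, so the eigenspaces are the lines spanned by $\mathbf{v} = (1,1)^T$ (eigenvalue $1$) and $\mathbf{u} = (1,-1)^T$ (eigenvalue $-1$), which together exhaust the eigenvectors of $A$ in $\F_p^2$. For $p = 2$ the polynomial becomes $(\lambda - 1)^2$ and $A$ is not the identity, so its only eigendirection is the line through $\mathbf{v} \equiv \mathbf{u} \pmod 2$. Hence any $M_{p,\mathbf{w}}$ that is a proper submodule must have $\mathbf{w}$ congruent (up to an invertible scalar of $\F_p$) to $\mathbf{v}$ or $\mathbf{u}$, forcing $M_{p,\mathbf{w}}$ to equal $M_p$ or $M_{p,-1}$.

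Next I will determine when $\mathbf{v}$ and $\mathbf{u}$ are actually eigenvectors of $B_k$ over $\F_p$. A direct computation gives $B_k \mathbf{v} = (1, k-1)^T$, which is a scalar multiple of $\mathbf{v}$ in $\F_p^2$ iff $k - 1 \equiv 1 \pmod p$, i.e., iff $p \divides k - 2$. Similarly $B_k \mathbf{u} = (-1, -1-k)^T$ is a scalar multiple of $\mathbf{u}$ iff $p \divides k + 2$. This yields the first two claims, and the observation that $\mathbf{v} - \mathbf{u} \in 2\Z^2$ confirms $M_2 = M_{2,-1}$ when $p = 2$.

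For the $p\Z^2$ assertion, I will observe that $p\Z^2$ is a submodule of index $p^2$, and any submodule strictly between $p\Z^2$ and $\Z^2$ corresponds, under reduction mod $p$, to a proper nonzero $G_2$-invariant line in $\F_p^2$; by the preceding steps this exists iff $p \divides (k-2)(k+2)$. Thus $p\Z^2$ is maximal exactly when $p \nmid (k-2)(k+2)$. To finish with the exhaustiveness statement I will apply Lemma~\ref{lem:maximal submodules contain pZ^2}: any maximal submodule $M$ of $\Z^2$ contains some $p\Z^2$, so either $M = p\Z^2$ or $M/p\Z^2$ is a proper nonzero $G_2$-submodule of $\F_p^2$, and the eigenvector analysis shows $M$ must be $M_p$ or $M_{p,-1}$. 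The only mildly delicate point, and the main obstacle to watch for, is the separate handling of $p = 2$ (where $A$ has a repeated eigenvalue and $M_2 = M_{2,-1}$), together with the case $p \divides \gcd(k-2, k+2)$, which forces $p = 2$ and so causes no double-counting.
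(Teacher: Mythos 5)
Your proposal is correct and follows essentially the same route as the paper's proof: the same computations of $B_k\mathbf{v}$ and $B_k\mathbf{u}$ modulo $p$, the same appeal to the eigenvector criterion and to the fact that every eigenvector of $A$ over $\F_p$ lies on the line through $\mathbf{v}$ or $\mathbf{u}$, and the same use of Lemma~\ref{lem:maximal submodules contain pZ^2} for exhaustiveness. The only difference is that you justify the eigenvector claim for $A$ (including the repeated-eigenvalue case $p=2$) via its characteristic polynomial, a detail the paper asserts without proof.
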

\begin{proof}
	Let $\mathbf{v}$ and $\mathbf{u}$ be as above, but consider them as elements of $\Z^2/p\Z^2$.
	We have that $B_k \mathbf{v} = \left(\begin{smallmatrix} 1 \\ k - 1 \end{smallmatrix}\right)$, and so
	$B_k \mathbf{v} = \lambda \mathbf{v}$ for some $\lambda \in \Z$ iff $k - 1 \equiv 1$ (mod $p$), i.e.\ iff $p \divides k - 2$.
	
	Also, $B_k \mathbf{u} = \left(\begin{smallmatrix} -1 \\ -1 - k \end{smallmatrix}\right)$, and so $B_k \mathbf{u} = \lambda \mathbf{u}$ for some $\lambda \in \Z$ iff
	$\lambda \equiv -1$ (mod $p$) and $-1 - k \equiv -\lambda$ (mod $p$), and this happens iff $-1 - k \equiv 1$ or $k \equiv -2$ (mod $p$),
	which is equivalent to $p \divides k + 2$.
	
	That no other $M_{p, \mathbf{w}}$ is a proper submodule of $\Z^2$ follows from the fact that any eigenvector of $A$ is a multiple
	of $\mathbf{v}$ or of $\mathbf{u}$. 
	
	Next, let $p \nmid (k - 2)(k + 2)$. Since neither $M_p$ nor $M_{p, -1}$ is a maximal submodule of $\Z^2$ and neither is any other
	$M_{p, \mathbf{w}}$, we have that $p\Z^2$ is a maximal submodule of $\Z^2$. And if $p \divides (k - 2)(k+2)$, then $p \divides k - 2$ or
	$p \divides k + 2$, in which case $M_p$ or $M_{p, -1}$ is a proper submodule of $\Z^2$ that properly contains $p\Z^2$.
	
	The final statement of this lemma follows from the previous parts of this lemma, together with Lemma~\ref{lem:maximal submodules contain pZ^2}:
	Indeed Lemma~\ref{lem:maximal submodules contain pZ^2} implies that either $p\Z^2$ is maximal, or some module containing it is. We are done
	since any proper submodule of $\Z^2$ containing $p\Z^2$ is of the form $M_{p, \mathbf{w}}$.
\end{proof}

For a module $N$, we let $\maxsubmod(N)$ denote the number of maximal submodules of $N$ of index $n$.
\begin{corollary}
	\label{cor:maximal submodule growth of Z^2}
	In what follows, $p$ stands for a prime. We have
	\[
	\maxsubmod(\Z^2) =
	\begin{cases}
	1 & \text{if } n = p \text{ and } p \divides (k - 2)(k + 2) \\
	1 & \text{if } n = p^2 \text{ and } p \nmid (k - 2)(k + 2) \\
	0 & \text{otherwise}.
	\end{cases}
	\]
\end{corollary}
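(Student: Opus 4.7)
The plan is to read off the corollary directly from Lemma~\ref{lem:the maximal submodules of Z^2}, which has already classified every maximal submodule of $\Z^2$ as one of $M_p$, $M_{p,-1}$, or $p\Z^2$ for some prime $p$, together with the conditions on $p$ under which each is maximal. What remains is to compute the indices and to check that the three cases in the classification do not overlap in a way that would give a count bigger than $1$.

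First I would compute $[\Z^2 : M_p]$ and $[\Z^2 : M_{p,-1}]$. By definition $p\Z^2 \submodule M_p \submodule \Z^2$ and $M_p / p\Z^2$ is the $1$-dimensional $\F_p$-subspace of $\Z^2/p\Z^2 \cong \F_p^2$ spanned by $\mathbf{v}$; hence $[\Z^2 : M_p] = p^2/p = p$, and the same argument with $\mathbf{u}$ gives $[\Z^2 : M_{p,-1}] = p$. Clearly $[\Z^2 : p\Z^2] = p^2$. This already pins down which of the three candidate submodules can contribute to $\maxsubmod(\Z^2)$ for a given $n$: only $p$ and $p^2$ can occur, so the ``otherwise'' case is immediate.

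Next I would handle the two nontrivial cases of the piecewise formula. For $n = p$ with $p \divides (k-2)(k+2)$, Lemma~\ref{lem:the maximal submodules of Z^2} says at least one of $M_p$ (when $p \divides k-2$) or $M_{p,-1}$ (when $p \divides k+2$) is a maximal submodule. To conclude the count is exactly $1$, I must rule out that both of them contribute distinct maximal submodules. If an odd prime $p$ divides both $k - 2$ and $k + 2$, then $p \divides \gcd(k-2,k+2) \divides 4$, a contradiction; so for odd $p$ only one of the two is maximal. For $p = 2$, both conditions can hold (when $k$ is even), but as noted in the text $\mathbf{v} \equiv \mathbf{u} \pmod 2$, so $M_2 = M_{2,-1}$ as a single submodule. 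Either way the count is exactly $1$. For $n = p^2$ with $p \nmid (k-2)(k+2)$, the only candidate submodule of index $p^2$ is $p\Z^2$, which is indeed maximal by Lemma~\ref{lem:the maximal submodules of Z^2}, giving exactly $1$.

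The only mildly subtle point is the coincidence $M_2 = M_{2,-1}$ at $p = 2$, and the fact that the obstruction $p \divides \gcd(k-2,k+2)$ forces $p = 2$ for odd primes; these are the only places where ``naive'' counting could have given a $2$ instead of a $1$. Everything else is a direct readoff from the preceding lemma.
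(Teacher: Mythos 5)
Your proposal is correct and follows essentially the same route as the paper: both read the count off from Lemma~\ref{lem:the maximal submodules of Z^2} and handle the only possible double-count by noting that $p \divides k-2$ and $p \divides k+2$ forces $p=2$, where $M_2 = M_{2,-1}$. Your added index computations ($[\Z^2:M_p]=[\Z^2:M_{p,-1}]=p$, $[\Z^2:p\Z^2]=p^2$) simply make explicit what the paper leaves implicit.
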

\begin{proof}
	Note that if $p \divides k - 2$ and $p \divides k + 2$, then $k - 2 \equiv k + 2$ (mod $p$), whence $p = 2$. And using the previous notation,
	recall that $M_2 = M_{2,-1}$. 
	
	This corollary then follows from Lemma~\ref{lem:the maximal submodules of Z^2}.
\end{proof}

\begin{lemma}
	\label{lem:derivations from G_2}
Consider $G_2$ with presentation $\langle a, b \mid aba^{-1}b\rangle$, as described above. Let $S$ be a simple $G_2$-module. 
Then there is a one-to-one correspondence between the set
 $\Der(G_2, S)$ and the set of functions $\delta: \{a,b\}\xrightarrow{} S$ satisfying 
\begin{equation*}
   (1-b^{-1}) \delta(a) = (-a-b^{-1})\delta(b). \tag{$*$}
\end{equation*}
\end{lemma}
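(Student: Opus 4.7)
The plan is to treat this as essentially the $k=2$ specialization of Lemma~\ref{lem:General case for generators x_i lemma}, using the renamed generators ($a$ in place of $x_1$ and $b$ in place of $x_2$). The relator $aba^{-1}b$ is exactly the relator one gets from the general presentation when $k=2$, $i=1$, $j=2$, so the same two-step argument should go through verbatim.

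First I would prove the forward direction. Let $\delta \in \Der(G_2, S)$. The relation $aba^{-1}b = 1$ can be rewritten as $ab = b^{-1}a$, and so $\delta(ab) = \delta(b^{-1}a)$. Expanding each side using the derivation property, together with the identity $\delta(b^{-1}) = -b^{-1}\delta(b)$ noted in the paper, and then collecting the $\delta(a)$ terms on one side and the $\delta(b)$ terms on the other, gives precisely $(1-b^{-1})\delta(a) = (-a-b^{-1})\delta(b)$. This is the same algebraic manipulation as in the forward direction of Lemma~\ref{lem:General case for generators x_i lemma}.

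For the reverse direction, given $\delta: \{a,b\} \to S$ satisfying $(*)$, first extend it (uniquely) to a derivation from the free group $F_2$ on $\{a,b\}$ to $S$, using the standard fact cited in the previous lemma (Brown exercise 3(a) or Lemma 2.20 of \cite{Kelley-dissertation}). Then I need to verify that $\delta(aba^{-1}b) = 0$; this is a direct computation, identical to the one in the proof of Lemma~\ref{lem:General case for generators x_i lemma} with $i=1$, $j=2$, in which repeated application of the derivation property yields an expression that collapses to $(1-b^{-1})\delta(a) - (-a-b^{-1})\delta(b)$, which is $0$ by $(*)$. Once $\delta$ vanishes on the single defining relator, Lemma 2.19 of \cite{Kelley-dissertation} (Brown exercise 4(a)) supplies a well-defined derivation $G_2 \to S$.

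There is no real obstacle here: the lemma is a direct instance of the earlier general lemma, and both directions are bookkeeping with the derivation identity and the single relation $ab = b^{-1}a$. The only thing to be careful about is making sure the notational switch from $x_1, x_2$ to $a, b$ is consistent throughout, and noting that simplicity of $S$ plays no role in this correspondence (as in the earlier lemma, it is really just a statement about arbitrary $G_2$-modules, stated for simple ones because that is how it will later be applied).
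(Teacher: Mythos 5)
Your proposal is correct and matches the paper's approach: the paper proves this lemma simply by citing Lemma~\ref{lem:General case for generators x_i lemma}, of which it is the $k=2$ instance with $a = x_1$, $b = x_2$. Your additional spelling-out of both directions is just a restatement of that general lemma's proof and introduces no new issues.
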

\begin{proof}
 This follows from Lemma $\ref{lem:General case for generators x_i lemma}$.
\end{proof}



\begin{lemma}
	\label{lem:counting derivations from G_2}
	Let $S$ be a simple $G_2$-module with $S$ either $\Z^2/M_p$ or $\Z^2/M_{p, -1}$ or $\Z^2/p\Z^2$. Then
	\[
	\lvert \Der(G_2, S) \rvert = 
	\begin{cases}
	|S|^2 = p^2 & \text{if } S = \Z^2/M_p \\
	|S| = p & \text{if } S = \Z^2/M_{p, -1} \text{ and } p > 2 \\
	|S| = p^2 & \text{if } S = \Z^2/p\Z^2.
	\end{cases}
	\]
\end{lemma}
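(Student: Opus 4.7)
My plan is to invoke Lemma~\ref{lem:derivations from G_2} so that counting derivations reduces to counting pairs $(\delta(a), \delta(b)) \in S \times S$ satisfying the linear relation $(1 - b^{-1}) \delta(a) = (-a - b^{-1}) \delta(b)$. Equivalently, the set of derivations is the kernel of the $\F_p$-linear map $\phi : S \times S \to S$ defined by $\phi(s_1, s_2) = (1-b^{-1})s_1 + (a + b^{-1})s_2$, so the task is to identify how $a$ and $b$ act on $S$ in each case and then compute the size of this kernel.

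For the two one-dimensional cases I would read off the scalar actions from what is already known about the eigenvectors $\mathbf{v}$ and $\mathbf{u}$. When $S = \Z^2/M_p$, the hypothesis $p \mid k - 2$ forces the characteristic polynomial of $B_k$ modulo $p$ to be $(\lambda - 1)^2$, so $b$ acts on $S$ as $+1$; and since $A$ has eigenvalues $\pm 1$ with $\mathbf{v}$ giving the $+1$ eigenspace, on the quotient $a$ must act as $-1$. Then $1 - b^{-1} = 0$ and $-a - b^{-1} = 0$, so $\phi$ is the zero map and we obtain $|S|^2 = p^2$ derivations. When $S = \Z^2/M_{p,-1}$ with $p > 2$, an analogous analysis using $p \mid k + 2$ gives $a$ acting as $+1$ and $b$ acting as $-1$; the constraint collapses to $2\delta(a) = 0$, which for $p > 2$ forces $\delta(a) = 0$ while $\delta(b)$ is free, yielding $|S| = p$ derivations.

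For the remaining case $S = \Z^2/p\Z^2$ (where $p \nmid (k-2)(k+2)$), I would work directly with matrices over $\F_p$. Using $B_k^{-1} = \left(\begin{smallmatrix} k & -1 \\ 1 & 0 \end{smallmatrix}\right)$, a short computation gives $\det(I - B_k^{-1}) = 2 - k$, which is nonzero modulo $p$ precisely because $p \nmid k - 2$. Thus $I - B_k^{-1}$ is invertible over $\F_p$, so the equation $(I - B_k^{-1})\delta(a) = (-A - B_k^{-1})\delta(b)$ determines $\delta(a)$ uniquely from each choice of $\delta(b) \in S$, giving $|S| = p^2$ derivations.

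The main obstacle I expect is pinning down the scalar actions in the one-dimensional cases: since both quotients have order $p$, it is easy to confuse which eigenvalue of each matrix survives on the quotient, and getting all the signs right is exactly what makes the linear equation degenerate in the expected way. Once that bookkeeping is settled, the rest of each case is a short linear algebra computation over $\F_p$.
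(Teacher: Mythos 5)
Your proposal is correct and takes essentially the same route as the paper: both reduce the count to solutions of the relation in Lemma~\ref{lem:derivations from G_2} and then do linear algebra over $\F_p$, with the $\Z^2/p\Z^2$ case argued identically via $\det(I_2 - B_k^{-1}) = 2-k \not\equiv 0$. The only cosmetic difference is that for the one-dimensional quotients you read off the induced scalar (eigenvalue) actions of $a$ and $b$, whereas the paper computes with the $2\times 2$ matrices and explicit coset representatives; note only that when $p=2$ (so $S = \Z^2/M_2$) your eigenvalue bookkeeping degenerates since $-1 \equiv 1$, but both operators still act as zero there, so the count $|S|^2$ is unaffected.
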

\begin{proof}
	The element $1 - b^{-1}$ in ($*$) from Lemma~\ref{lem:derivations from G_2} acts on $\delta(a)$ by multiplication by the matrix
	$I_2 - B_k^{-1} = \left( \begin{smallmatrix} 1 - k & 1 \\ -1 & 1 \end{smallmatrix} \right)$ which has determinant $2 - k$, and the element
	$-a - b^{-1}$ acts by multiplication by the matrix $-A - B_k^{-1} = \left( \begin{smallmatrix} -k & 0 \\ -2 & 0 \end{smallmatrix} \right)$.
	
	First, suppose $S \neq \Z^2/M_p$. Then $S = \Z^2/M_{p, -1}$ with $p > 2$ (since $M_2 = M_{2, -1}$) or
	$S = \Z^2/p\Z^2$. Either way, with $p$ thus determined by $S$, we have that $M_p$ is not a maximal submodule of $\Z^2$
	since either $p\Z^2$ is maximal or $M_{p, -1}$ is and $\maxsubmod[p](\Z^2) \leq 1$. 
	Hence $p \nmid k - 2$ by Lemma~\ref{lem:the maximal submodules of Z^2}. 
	In this case, $I_2 - B_k^{-1}$ is invertible, considered as an element of $\GL_2(\F_p)$. Hence ($*$) from
	Lemma~\ref{lem:derivations from G_2} may be written as 
	\[
	\delta(a) = (I_2 - B_k^{-1})^{-1}(-A - B_k^{-1}) \delta(b).
	\]
	And so in this case, we are free to choose $\delta(b)$ to be any element of $S$, and then this determines what $\delta(a)$ must be.
	Thus we would have $\lvert \Der(G_2, S) \rvert = |S|$.
	
	Next, suppose $S = \Z^2/M_p$. Then the maximality of $M_p$ implies (by Lemma~\ref{lem:the maximal submodules of Z^2})
	that $p \divides k - 2$. And so then $I_2 - B_k^{-1} \equiv \left( \begin{smallmatrix}-1 & 1 \\ -1 & 1 \end{smallmatrix} \right)$ and
	$-A - B_k^{-1} \equiv \left( \begin{smallmatrix} -2 & 0 \\ -2 & 0 \end{smallmatrix} \right)$ (mod $p$).
	
	We have that $\left\{\left( \begin{smallmatrix} i \\ 0 \end{smallmatrix} \right) : 0 \leq i < p \right\}$ is a complete set of representatives of 
	$\Z^2/M_p$. Then letting $\delta(a) = \left( \begin{smallmatrix} i \\ 0 \end{smallmatrix} \right) + M_p$ and 
	$\delta(b) = \left( \begin{smallmatrix} j \\ 0 \end{smallmatrix} \right) + M_p$, we have that equation ($*$) from Lemma~\ref{lem:derivations from G_2}
	holds because $(I_2 - B_k^{-1}) \left( \begin{smallmatrix} i \\ 0 \end{smallmatrix} \right)  = \left( \begin{smallmatrix} -i \\ -i \end{smallmatrix} \right) \in M_p$ and
	$(-A - B_k^{-1}) \left( \begin{smallmatrix} j \\ 0 \end{smallmatrix} \right) = \left( \begin{smallmatrix} -2j \\-2j \end{smallmatrix} \right) \in M_p$. And so both
	$(1-b^{-1}) \delta(a)$ and $(-a-b^{-1})\delta(b)$ are the trivial element of $\Z^2/M_p$. Therefore, in this case, we have
	$|S|^2$ options for a derivation from $G_2$ to $S$.
		
\end{proof}

  \begin{theorem}
  	\label{thm:exact maximal subgroup growth of H_k} 
  	In what follows, $p$ stands for a prime. We have
  	\[
  	\maxsubgr(H_k) = 
  	\begin{cases}
  	n^2 + n + 1 & \text{if } n = p \text{ and } p \divides k- 2\\
  	2n + 1  & \text{if } n = p \text{ and } p \divides k + 2 \text{ with } p > 2\\
  	n + 1 & \text{if } n = p \text{ and } p \nmid (k-2)(k+2)\\
  	n & \text{if } n = p^2 \text{ and } p \nmid (k -2)(k+2) \\
  	0 & \text{otherwise}.
  	\end{cases}
  	\]
  \end{theorem}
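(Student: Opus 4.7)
The plan is to mirror the proof of Theorem~\ref{thm:m_n(G_k)}. Writing $H_k = \Z^2 \rtimes G_2$ and applying Lemma~5 from \cite{Kelley-some metabelian groups} gives
\[
\maxsubgr(H_k) = \maxsubgr(G_2) + \sum_{N_0} \lvert \Der(G_2, \Z^2/N_0) \rvert,
\]
where the sum runs over the maximal submodules $N_0$ of $\Z^2$ of index $n$. So to finish, I just need to slot in the three ingredients that have already been assembled in the preceding results.

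First I would invoke the $k=2$ case of Theorem~\ref{thm:m_n(G_k)} for $\maxsubgr(G_2)$: it equals $n+1$ at every prime (including $n=2$, where it equals $3$) and is $0$ otherwise. Second, Corollary~\ref{cor:maximal submodule growth of Z^2} tells me precisely when the sum is nonempty, and Lemma~\ref{lem:the maximal submodules of Z^2} identifies the unique $N_0$ in each case as $M_p$, $M_{p,-1}$, or $p\Z^2$. Third, Lemma~\ref{lem:counting derivations from G_2} supplies $\lvert \Der(G_2, \Z^2/N_0) \rvert$ for each of those three modules. Combining these produces one branch of the claimed piecewise formula for each type of $N_0$; all remaining $n$ contribute $0$ because both $\maxsubgr(G_2)$ and the sum vanish.

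The main obstacle is bookkeeping at $p=2$: there $M_2 = M_{2,-1}$ collapse to a single module, so I must take care not to double count. Corollary~\ref{cor:maximal submodule growth of Z^2} already addressed this by noting that $p \divides k-2$ and $p \divides k+2$ can hold simultaneously only when $p = 2$, which is exactly when $k$ is even. Routing every such instance into the $p \divides k-2$ branch absorbs it cleanly into the formula $n^2+n+1$: at $n=2$ with $k$ even one checks $3 + 4 = 7 = 2^2 + 2 + 1$ as a sanity test. With that accounted for, the remaining cases can be stated under $p>2$ where needed, and the piecewise formula drops out by direct substitution.
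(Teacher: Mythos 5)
Your proposal is correct and follows essentially the same route as the paper: the paper's proof likewise applies Lemma~5 of \cite{Kelley-some metabelian groups} to $\Z^2 \trianglelefteq H_k$, takes $\maxsubgr(G_2) = n+1$ at primes from Theorem~\ref{thm:m_n(G_k)}, and combines Lemmas~\ref{lem:the maximal submodules of Z^2} and \ref{lem:counting derivations from G_2} with Corollary~\ref{cor:maximal submodule growth of Z^2} to evaluate the derivation sum case by case, adding the two contributions. Your explicit check that the $p=2$, $k$ even situation lands in the $p \divides k-2$ branch (giving $3+4=7$) is a nice sanity test but matches how the paper already handles that coincidence via $M_2 = M_{2,-1}$.
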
 
  \begin{proof}
  Consider $\Z^2 \trianglelefteq \Z^2 \rtimes G_2$. By Lemma 5 from \cite{Kelley-some metabelian groups}, we have 
  \[\tag{1}
      \maxsubgr(H_k) = \maxsubgr(G_2) + \sum_{N_0} \lvert \Der(G_2, \Z^2 / N_0)\rvert
  \]
  where the sum is over all maximal submodules $N_0$ of $\Z^2$ of index $n$. 
  Also, by Theorem~\ref{thm:m_n(G_k)}, we have 
  \[\tag{2}
      \maxsubgr(G_2)= 
      \begin{cases} 
     n+1 & \text{if } n \text{ is a prime}\\
	 0 & \text{otherwise}.
 \end{cases} 
  \]
  We have that Lemmas~\ref{lem:the maximal submodules of Z^2} and \ref{lem:counting derivations from G_2}
  (and Corollary~\ref{cor:maximal submodule growth of Z^2})
   together imply that
  \[\tag{3}
  \sum_{N_0} \lvert \Der(G_2, \Z^2 / N_0)\rvert = 
  \begin{cases}
  n^2 & \text{if } n = p \text{ and } p \divides k - 2\\
  n & \text{if } n = p \text{ and } p \divides k +2 \text{ with } p > 2\\
  n & \text{if } n = p^2 \text{ and } p \nmid (k - 2)(k + 2)\\
  0 & \text{otherwise}.
  \end{cases}
  \]
  The present theorem follows from (1) by adding (2) and (3).
  \end{proof}

For $n \in \Z$, let $\pi(n)$ denote the set of prime numbers dividing $n$. Then a consequence of Theorem~\ref{thm:exact maximal subgroup growth of H_k}
is that for $i, j \in \Z$, if $\pi(i - 2) \neq \pi(j - 2)$ or $\pi(i + 2) \neq \pi(j+ 2)$, then $H_i \not\cong H_j$.
Also, note that $\mdeg(H_2) = 2$ (because $\pi(0)$ is the set of all primes), and for all $k \neq 2$, $\mdeg(H_k) = 1$.

\textsc{Department of Mathematics and Computer Science, Colorado College,
	Colorado Springs, Colorado 80903}\par\nopagebreak
\textit{email address}: \texttt{akelley@coloradocollege.edu}\footnote{The first author is a Visiting Assistant Professor of Mathematics at Colorado College.
	A possibly more permanent email address is \texttt{akelley2500@gmail.com} }

\vspace{.1in}

\textsc{Department of Mathematics and Computer Science, Colorado College,
	Colorado Springs, Colorado 80903}\par\nopagebreak
\textit{email address}: \texttt{e\_wolfe@coloradocollege.edu}

\end{document}